\theoremstyle{plain}
\newtheorem{dfn}{Definition}
\newtheorem{thm}{Theorem}
\newtheorem{lem}{Lemma}
\newcommand{\C}{\mathbb{C}}
\newcommand{\Z}{\mathbb{Z}}
\newcommand{\sm}{\smallsetminus}
\newcommand{\iu}{\sqrt{-1}}
\newcommand{\veps}{\varepsilon}
\newcommand{\qsl}{\tilde{U}_{q} \mathfrak{sl}_{2}}
\newcommand{\unrolledqsl}{\bar{U}^{H}_{q}\mathfrak{sl}_{2}}
\DeclareMathOperator{\CGP}{N}
\DeclareMathOperator{\Op}{F}
\DeclareMathOperator{\modOp}{F^{\prime}}
\DeclareMathOperator{\qdim}{qdim}
\DeclareMathOperator{\modDim}{d}
\DeclareMathOperator{\id}{id}
\DeclareMathOperator{\CJ}{J}
\DeclareMathOperator{\ADO}{ADO}
\begin{document}

\title[Reshetikhin--Turaev and re-normalized link invariants]{Relations between Reshetikhin--Turaev and re-normalized link invariants}
\author[A. Mori]{Akihito Mori}
\address{Graduate School of Science, Tohoku University, Aramaki-aza-Aoba 6-3, Aoba-ku, Sendai, 980-8578, Japan}
\email{akihito.mori.s5@dc.tohoku.ac.jp}
\thanks{The first author is supported by JSPS KAKENHI Grant Number JP 21J10271.}


\begin{abstract}
    Costantino--Geer--Patureau-Mirand proved relations between the Reshetikhin--Turaev link invariants and the re-normalized link invariants for knots.
    Their theorem says that residues of the re-normalized link invariants are given by the Reshetikhin--Turaev link invariants.
    However, in the case of links, the residues are vanish, so we can not obtain any relations from the residues.
    In this paper, we prove that the Reshetikhin--Turaev link invariants appear in higher order terms of the re-normalized link invariants for links which are plumbed.
\end{abstract}


\maketitle
\tableofcontents

\section{Introduction} \label{sec: intro}

Reshetikhin--Turaev~\cite{WRTinv} constructed quantum invariants of 3-manifolds by using representations of the small quantum group $\qsl$ at roots of unity.
They reduces 3-manifolds to framed links in $S^{3}$ by surgery presentations.
For a link colored by symmetric tensor representations, they constructed the Reshetikhin--Turaev link invariants $\Op$ and took a weighted sum to obtain the Witten--Reshetikhin--Turaev (WRT) invariant.

There is a slightly generalized version of $\qsl$.
We call this quantum group the unrolled quantum group $\unrolledqsl$.
This quantum group has typical representations parametrized by complex numbers besides the representations of the small quantum group.
However $\Op(L)$ vanishes for any link $L$ which has a component colored by a typical representation because the representation has zero quantum dimension.
Geer--Patureau-Mirand--Turaev~\cite{modifiedqdim} introduced a modified quantum dimension $\modDim$ and constructed a new isotopy invariant $\modOp$.
We call the invariant the re-normalized Reshetikhin--Turaev link invariant.
Costantino--Geer--Patureau-Mirand~\cite{CGPinv} constructed nonsemisimple 3-manifold invariant (CGP invariant) $\CGP$ by a weighted sum of the re-normalized Reshetikhin--Turaev link invariant.

Costantino--Geer--Patureau-Mirand~\cite{relation} proved relations between $\Op$ and $\modOp$ for knots.
They showed that $\modOp$ of a knot in $S^{3}$ is a meromorphic function of its color and the residues are given by $\Op$.
Using the relations, Costantino--Geer--Patureau-Mirand~\cite{relation} and Costantino--Gukov--Putrov~\cite{WRTandCGP} proved relations between WRT invariant and CGP invariant of 3-manifolds presented by 0-framed knot.

One obstruction to generalizing their theorem stems from the fact that $\modOp$ of a link is a holomorphic function of its color, that is the residues vanish.
This is why the theorem is restricted to knots.

In this paper, we generalize their theorem for plumbed graphs.
The following is the main theorem of this paper.

\begin{thm} \label{thm: main}
    Fix a $2r$-th root of unity $q = \exp(\pi \iu / r)$.
    Let $\Gamma_{f} = (V, E, f)$ be a plumbed graph, where $f \colon V \to \Z$ is a framing.
    Take $x \in X_{r}^{V}$, then
    \begin{align*}
        \Op_{r}\left( \Gamma_{f}(r-1-x) \right)
        =
        &\frac{(-1)^{|E| + \sum_{v \in V_{\geq 2}} (d_{v}-1) x_{v}}}{r \{ 1 \}} \\
        &\times \sum_{\veps \in \{ \pm1 \}^{E}} 
        \left( \prod_{e \in E} \veps(e) \right) \lim_{\alpha \to x(\veps)} \frac{\modOp_{r}\left( \Gamma_{f}(\alpha) \right)}{\prod_{v \in V_{\geq 2}} \{ r\alpha_{v} \}^{d_{v}-1}}
    \end{align*}
    , where $d_{v}$ is a degree of a vertex $v \in V$, $X_{r}^{V} := \Z \sm r\Z$ and $\Gamma_{f}(\alpha)$ is a plumbed graph whose vertices are colored by $\alpha = (\alpha_{V})_{v \in V} \in \C^{V}$.
\end{thm}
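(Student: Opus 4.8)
The plan is to compute both sides in closed form from the graphical calculus and to match them term by term. A plumbed graph $\Gamma_{f}$ presents a framed link in which every vertex $v$ is an unknotted circle with framing $f_{v}$ and every edge is a Hopf clasp between the two incident circles, so that a degree-$d_{v}$ vertex is a circle threaded by $d_{v}$ strands. On the $\Op_{r}$ side the colors $r-1-x_{v}$ are simple $\unrolledqsl$-modules of nonzero quantum dimension, so $\Op_{r}(\Gamma_{f}(r-1-x))$ is computed by the ordinary ribbon calculus: a closed circle colored by a simple module acts on each threading strand by a normalized $S$-matrix entry, and contracting inward gives a product of $\qdim(\cdot)^{1-d_{v}}$, one Hopf pairing per edge, and the framing twists. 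On the $\modOp_{r}$ side every color is a typical module $V_{\alpha_{v}}$ with $\qdim(V_{\alpha_{v}})=0$, so the honest $S$-matrix evaluation vanishes and one must cut one component and use the modified trace. I would therefore first record the local data of typical modules: $\modDim(V_{\alpha})$ is proportional to $1/\{r\alpha\}$, with a simple pole at $\alpha\in\Z$, and the open Hopf link scalar of a $V_{\alpha}$-loop around a $V_{\beta}$-strand equals the modified $S$-matrix entry divided by $\modDim(V_{\beta})$, hence is proportional to $\{\alpha\beta\}\,\{r\beta\}$.

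With these local pieces the key lemma is an explicit closed form, which for a tree reads
\[
\modOp_{r}(\Gamma_{f}(\alpha)) = C\,\Bigl(\prod_{v}\{r\alpha_{v}\}^{d_{v}-1}\Bigr)\Bigl(\prod_{e}\{\alpha_{v}\alpha_{w}\}\Bigr)\prod_{v}\theta_{\alpha_{v}}^{f_{v}},
\]
obtained by contracting the open Hopf link scalars inward toward the cut vertex. The exponent $d_{v}-1$ arises uniformly: each of the $d_{v}$ edges at $v$ supplies one factor $\{r\alpha_{v}\}$ through the $\modDim$-normalization of the incident open Hopf links, while the modified trace removes exactly one such factor. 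This simultaneously proves that $\modOp_{r}$ is holomorphic with a zero of order $d_{v}-1$ at each vertex of $V_{\geq 2}$ (so that the residues vanish, as in the introduction) and explains the denominator $\prod_{v\in V_{\geq 2}}\{r\alpha_{v}\}^{d_{v}-1}$: dividing cancels precisely these factors and leaves $C\prod_{e}\{\alpha_{v}\alpha_{w}\}\prod_{v}\theta_{\alpha_{v}}^{f_{v}}$, which is finite at integer colors.

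It then remains to take the limit $\alpha\to x(\veps)$ and to identify the result with $\Op_{r}$. Writing each surviving edge factor as $\{\alpha_{v}\alpha_{w}\}=q^{\alpha_{v}\alpha_{w}}-q^{-\alpha_{v}\alpha_{w}}$ exposes two exponential branches; labelling them by $\veps(e)=\pm1$ and forming $\sum_{\veps}\bigl(\prod_{e}\veps(e)\bigr)\lim_{\alpha\to x(\veps)}$ reassembles the product of simple-module $S$-matrix entries. An elementary identity of the shape $\sin(\pi(r-x_{v})(r-x_{w})/r)=\pm\sin(\pi x_{v}x_{w}/r)$ converts the limiting exponentials into the genuine $S$-matrix entries of the colors $r-1-x_{v}$ and produces the sign $(-1)^{|E|+\sum_{v\in V_{\geq 2}}(d_{v}-1)x_{v}}$, while comparing the residual normalizations of $\modDim(V_{\alpha_{v}})$ against the quantum dimensions of the simple modules yields the overall constant $1/(r\{1\})$ and matches the framing twists on both sides.

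The main obstacle is the passage beyond trees. For a plumbed graph with cycles the inward contraction no longer terminates in a product: resolving the clasps around an independent cycle forces a summation over an internal simple or typical summand, so the closed form acquires a state sum and the clean factorization above must be replaced by a more careful representation-theoretic computation at the degenerate colors $\alpha_{v}\to x_{v}$. I expect the uniform vanishing order $d_{v}-1$ and the per-edge factor $\{\alpha_{v}\alpha_{w}\}$ to persist, and this is exactly what the signed sum $\sum_{\veps}\prod_{e}\veps(e)$ is designed to organize, since expanding every edge $S$-matrix entry into its two exponentials converts the cyclic state sum into the $2^{|E|}$ branch sum. Proving that these internal summations collapse correctly, that the limit and the branch expansion may be interchanged, and that every sign together with the factor $1/(r\{1\})$ emerges exactly, is the delicate heart of the argument.
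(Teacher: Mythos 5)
Your overall strategy---contract the tree into iterated connected sums of Hopf links to get a closed product formula for $\modOp_{r}$, observe that it vanishes to order $d_{v}-1$ at integer colors, divide, and match branch-by-branch against $\Op_{r}$---is viable, and for trees it is essentially the paper's own induction (base case $H$, inductive step $\Gamma' = \Gamma \# H$ at a vertex, via \Cref{lem: connected_sum}) written out in closed form. The vanishing-order claim is correct. But your key lemma has the wrong per-edge factor, and this error is fatal to the matching step. By \Cref{lem: hopf}(1), the Hopf link with two \emph{typical} colors evaluates to $(-1)^{r-1} r\, q^{\alpha_{v}\alpha_{w}}$, a single exponential; consequently the open Hopf scalar is $q^{\alpha\beta}\{r\beta\}/\{\beta\}$, not proportional to $\{\alpha\beta\}\{r\beta\}$, and the closed form for a tree contains $\prod_{e} q^{\alpha_{v}\alpha_{w}}$, with no factor $\{\alpha_{v}\alpha_{w}\}$ anywhere (your formula already contradicts \Cref{lem: hopf}(1) for $\Gamma = H$ itself). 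The sine factors live on the \emph{other} side: by \Cref{lem: hopf}(2) it is $\Op_{r}$ of the Hopf link with simple colors that produces $\{(m+1)(n+1)\}/\{1\}$. The entire purpose of the signed sum over $\veps$ is to synthesize those sines out of single exponentials: for a tree edge $e = (v,w)$ one has $x_{v}(\veps)x_{w}(\veps) = \veps(e)\, x_{v}x_{w}$, hence $\sum_{\veps(e) = \pm 1} \veps(e)\, q^{x_{v}(\veps)x_{w}(\veps)} = \{x_{v}x_{w}\}$. If $\modOp_{r}$ already contained $\{\alpha_{v}\alpha_{w}\}$ per edge, as you claim, then each limit $\lim_{\alpha \to x(\veps)}$ would produce $\bigl(\prod_{e}\veps(e)\bigr)\prod_{e}\{x_{v}x_{w}\}$, and the signed sum would yield $2^{|E|}\prod_{e}\{x_{v}x_{w}\}$ --- an extraneous factor $2^{|E|}$ --- so carried out literally your computation would \emph{disprove} the stated identity rather than prove it. You have the roles of exponential and sine reversed.

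Second, the ``main obstacle'' you flag --- plumbed graphs with cycles --- is not part of the statement: the paper fixes the convention that all plumbed graphs are weighted trees, and the contraction into connected sums of Hopf links (equivalently the paper's induction, whose limit bookkeeping is handled by \Cref{lem: limit sign} and \Cref{lem: limit}) genuinely requires this; the paper itself remarks that the proof depends on this property of plumbed graphs. So your caution is spent on a non-issue, while the actual defect sits in the local input to your key lemma. Once the per-edge factor is corrected to $q^{\alpha_{v}\alpha_{w}}$, your outline does become a legitimate closed-form repackaging of the paper's proof, but the remaining bookkeeping --- extracting the sign $(-1)^{|E| + \sum_{v \in V_{\geq 2}}(d_{v}-1)x_{v}}$ from the factors $\{x_{v}(\veps)\}^{d_{v}-1}$ and $(-1)^{m+n}$, and matching twists via $(-1)^{f_{v}(r-1-x_{v})}q^{f_{v}(x_{v}^{2}+2x_{v})/2} = q^{f_{v}(x_{v}^{2}-(r-1)^{2})/2}$ --- is precisely where the content lies, and you only assert it.
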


In our case $\modOp$ is equal to the Akutsu--Deguchi--Ohtsuki invariant (see ~\cite{modifiedqdim}), so we can rephrase this theorem in terms of the colored Jones polynomial $\CJ$ and $\ADO_{r}$ as follows.

\begin{thm} \label{thm: colored Jones and ADO}
    There exist relations between the colored Jones polynomials and Akutsu--Deguchi--Ohtsuki invariants of plumbed graphs.
    \begin{align*}
        \CJ(\left( \Gamma_{f}(r-1-x) \right))|_{q = \exp(\pi \iu / r)}
        =
        &\frac{(-1)^{|E| + \sum_{v \in V_{\geq 2}} (d_{v}-1) x_{v}}}{r \{ 1 \}} \\
        &\times \sum_{\veps \in \{ \pm1 \}^{E}} 
        \left( \prod_{e \in E} \veps(e) \right) \lim_{\alpha \to x(\veps)} \frac{\ADO_{r}\left( \Gamma_{f}(\alpha) \right)}{\prod_{v \in V_{\geq 2}} \{ r\alpha_{v} \}^{d_{v}-1}}.
    \end{align*}
\end{thm}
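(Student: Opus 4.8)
The plan is to deduce \Cref{thm: colored Jones and ADO} directly from \Cref{thm: main} by translating each side of the identity through the two standard dictionaries relating the Reshetikhin--Turaev and re-normalized invariants to their diagrammatic counterparts. Since the shape of the formula is literally the same in both statements, no new analytic or combinatorial work on the plumbed graph is required: the content reduces to verifying two invariant identifications and then checking that the sign prefactor $(-1)^{|E| + \sum_{v \in V_{\geq 2}} (d_v - 1) x_v}/(r \{ 1 \})$, the bracket normalizations $\{ r\alpha_v \}^{d_v - 1}$, and the limit over $\veps \in \{ \pm 1 \}^{E}$ are interpreted identically under both translations.

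First I would treat the right-hand side. By Geer--Patureau-Mirand--Turaev~\cite{modifiedqdim}, the re-normalized invariant $\modOp_r$ built from the modified quantum dimension $\modDim$ on the typical representations of $\unrolledqsl$ coincides, for links and colored graphs, with the Akutsu--Deguchi--Ohtsuki invariant $\ADO_r$. I would record the precise form of this identification for a plumbed graph $\Gamma_f(\alpha)$, checking that the continuous color parameter $\alpha \in \C^{V}$ used in $\modOp_r$ matches the highest-weight parameter carried by $\ADO_r$, and that the modified-dimension conventions agree, so that the quotient $\modOp_r(\Gamma_f(\alpha)) / \prod_{v \in V_{\geq 2}} \{ r\alpha_v \}^{d_v - 1}$ transports verbatim to the $\ADO_r$ side. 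This step is essentially a citation, but the matching of the color normalization is where one must be careful.

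Next I would treat the left-hand side. The Reshetikhin--Turaev functor $\Op_r$ applied to a graph colored by the symmetric tensor (small quantum group) representations is, by construction, the colored Jones evaluation; hence $\Op_r(\Gamma_f(r-1-x))$ equals the colored Jones polynomial $\CJ(\Gamma_f(r-1-x))$ specialized at $q = \exp(\pi \iu / r)$. Here I would confirm that the colors $r-1-x$ with $x \in X_r^V = \Z \sm r\Z$ correspond to admissible irreducible representations, so that the specialization of the generic colored Jones polynomial at $q = \exp(\pi \iu / r)$ agrees with the root-of-unity invariant $\Op_r$ rather than differing by a truncation artifact. With both dictionaries in hand, substituting them into \Cref{thm: main} yields \Cref{thm: colored Jones and ADO} term by term.

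The main obstacle I anticipate is not conceptual but bookkeeping: ensuring that the two identifications are exact equalities, with matching normalizations of the unknot, of the cups, caps, and trivalent vertices appearing in the graph calculus, and of the symbols $\{ \cdot \}$, rather than equalities up to an untracked scalar or a reindexing $x \mapsto r-1-x$ of the color. Once these normalizations are pinned down so that the prefactor and the signed sum over $\veps \in \{ \pm 1 \}^{E}$ are preserved under both translations, \Cref{thm: colored Jones and ADO} follows immediately from \Cref{thm: main}.
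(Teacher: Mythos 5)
Your proposal is correct and takes essentially the same route as the paper: the paper obtains this theorem as a direct rephrasing of \Cref{thm: main}, invoking the identification $\modOp_{r} = \ADO_{r}$ from~\cite{modifiedqdim} together with the fact that $\Op_{r}$ on the symmetric (small quantum group) colors is the colored Jones evaluation at $q = \exp(\pi \iu / r)$. Your additional normalization bookkeeping is more care than the paper records, but it is the same argument.
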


The interesting aspect of this theorem is that the Reshetikhin--Turaev link invariants appear in the higher order terms of the re-normalized link invariants.
The proof of this theorem depends on a property of plumbed graphs, so it is not clear whether it can be generalized for other links or not.

Besides the above results, there are various relations of invariants constructed from non-integer weight representation of $\unrolledqsl$.
Murakami~\cite{Murakamia} constructed the colored Alexander invariant (which is a re-construction of the ADO invariant) and compared it with the colored Jones polynomial for non-plumbed links.
Murakami--Nagatomo also constructed the logarithmic invariant~\cite{MurakamiNagatomo} and proved relations between this invariant and the colored Jones, Alexander invariants.
After this paper, Murakami~\cite{MurakamiJones} expressed the logarithmic invariants of knots in terms of derivatives of the colored Jones invariants.
He also conjectured relations between the logarithmic invariants ant the hyperbolic volumes of the cone manifolds along knots and proved for the figure eight knot.
There is a conjecture about relations between GPPV invariants and ADO invariants~\cite{WRTandCGP}.
Beliakova--Hikami~\cite{BelHika} proved relations between Habiro's cyclotomic expansion of the colored Jones polynomial and the ADO invariants of the double twist knots.

For 3-manifold invariants, Costantino--Geer--Putrov~\cite{WRTandCGP} proved relations between CGP invariants and GPPV invariants under technical conditions.
Chen--Kuppun--Srinivasan~\cite{Chen} proved relations between Hennings invariants and WRT invariants.
Beliakova--Hikami~\cite{BelHika} proved relations between WRT invariants and CGP invariants of 3-manifolds obtained by 0-surgery on the double twist knots.
\section*{Acknowledgement} \label{sec:acknowledgement}

The work is supported by JSPS KAKENHI Grant Number JP 21J10271 and a Scholarship of Tohoku University, Division for Interdisciplinary Advanced Research and Education.
The author would like to show his greatest appreciation to Professor Yuji Terashima and Professor Jun Murakami for giving many pieces of advice. 
The author thanks his family for all the support.

\section{Preliminaries} \label{sec: pre}

\subsection{Notation} \label{subsec: notation}

All plumbed graphs in this paper are weighted trees.
Let $\Gamma_{f} = (V, E, f)$ stand for a plumbed graph where $V$ is a set of vertices, $E$ is a set of edges, and $f \colon V \to \Z$ is a weight.
For $v \in V$, let $d_{v}$ be the degree of $v$.
For $n \in \Z$, let $V_{\geq n}$ be a subset of $V$ consisting of vertices whose degrees are greater than $n$.

We identify a plumbed graph with a link.
Vertices correspond to unknots.
Weights correspond to framings.
If two vertices are connected by an edge, then corresponding unknots are linked as a Hopf link.
For example, see \Cref{fig: graph}.
In this paper, we let $H$ be a Hopf link.

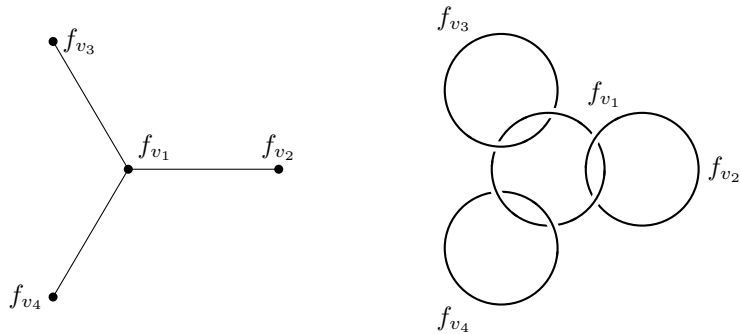
\begin{figure}[htb]
	\begin{minipage}[htb]{0.45\linewidth}
		\centering
		\begin{tikzpicture}
            \coordinate[label=45:$f_{v_{1}}$](O)at(0, 0);
            \coordinate[label=above:$f_{v_{2}}$](A)at(2, 0);
            \coordinate[label=right:$f_{v_{3}}$](B)at(-1, 1.7);
            \coordinate[label=left:$f_{v_{4}}$](C)at(-1, -1.7);
            \foreach \P in{O, A, B, C}
            {
                \fill[black](\P)circle(0.06);
            }

            \draw (O) -- (A);
            \draw (O) -- (B);
            \draw (O) -- (C);
		\end{tikzpicture}
	\end{minipage}
	\begin{minipage}[htb]{0.45\linewidth}
		\centering
		\begin{tikzpicture}[scale=0.5]
			\begin{knot}[
				clip width=5, 
				flip crossing=1, 
				flip crossing=4, 
				flip crossing=6
				]
				\strand[thick] (0, 0) circle [radius=1.5];
				\strand[thick] (2.5, 0) circle [radius=1.5];
				\strand[thick] (-1.25, 2.1) circle [radius=1.5cm];
				\strand[thick] (-1.25, -2.1) circle [radius=1.5cm];

				\node (1) at (1.5, 2) {$f_{v_{1}}$};
				\node (2) at (4.7, 0) {$f_{v_{2}}$};
				\node (3) at (-2.5, 4) {$f_{v_{3}}$};
				\node (4) at (-2.5, -4) {$f_{v_{4}}$};
			\end{knot}
		\end{tikzpicture}
	\end{minipage}
\caption{A plumbed graph and the corresponding framed link}
\label{fig: graph}
\end{figure}

Finally, we define a symbol used in the proof of the main theorem.
Fix a vertex $v_{0}$.
For any vertex $v$, there is a unique path $P_{v}$ from $v_{0}$ to $v$.
\begin{dfn}
    For $x \in \Z^{V}$ and $\veps \in \{ \pm1 \}^{E}$, we define $x(\veps) \in \Z^{V}$ by 
    \[
        x_{v}(\veps) = \left( \prod_{e \in P_{v}} \veps_{e} \right) x_{v}.
    \]
\end{dfn}

\subsection{Quantum groups and its representations} \label{subsec: quantum group}

Fix a positive integer $r$ and let $q = \exp(\pi \iu / r)$ be a $2r$ th-root of unity.
For a complex number $z$, we set 
\begin{align*}
    &q^{z} = \exp \left( \frac{\pi \iu z}{r} \right), & &\{ z \} = q^{z} - q^{-z}, & &[z] = \frac{\{ z \}}{\{ 1 \}}.
\end{align*}

We recall two quantum groups.
Consider a $\C$-algebra $\tilde{U}_{q} \mathfrak{sl}_{2}$ generated by $E, F, K^{\pm1}$ whose relations are
\begin{align*}
    &KK^{-1} = 1 = K^{-1}K, & &KEK^{-1} = q^{2}E, & &KFK^{-1} = q^{-2}F, \\
    &[E, F] = \frac{K - K^{-1}}{q - q^{-1}}, & &E^{r} = F^{r} = 0, & &K^{2r} = 1.
\end{align*}
We define the coproduct $\Delta \colon \tilde{U}_{q} \mathfrak{sl}_{2} \to \tilde{U}_{q} \mathfrak{sl}_{2} \otimes \tilde{U}_{q} \mathfrak{sl}_{2}$, counit $\epsilon \colon \tilde{U}_{q} \mathfrak{sl}_{2} \to \C$ and antipode $S \colon \tilde{U}_{q} \mathfrak{sl}_{2} \to \tilde{U}_{q} \mathfrak{sl}_{2}$ as follows:
\begin{align*}
    &\Delta(E) = 1 \otimes E + E \otimes K, & &\Delta(F) = K^{-1} \otimes F + F \otimes 1, & &\Delta(K) = K \otimes K, \\
    &\epsilon(E) = 0, & &\epsilon(F) = 0, & &\epsilon(K) = 1, \\
    &S(E) = -EK^{-1}, & &S(F) = -KF, & &S(K) = K^{-1}.
\end{align*}
Combined with these maps, $\tilde{U}_{q} \mathfrak{sl}_{2}$ is a Hopf algebra.
We call $\tilde{U}_{q} \mathfrak{sl}_{2}$ the small quantum group.

For each $n \in \{ 0, \dots, r-1 \}$, there is an irreducible highest weight representation $S_{n}$.
This module has a basis $\{ s_{i} \}_{i=0}^{n}$ and actions are given by
\begin{align*}
    & Es_{i} = [i][n-i+1]s_{i-1}, & &Fs_{i} = s_{i+1}, & &Ks_{i} = q^{n-2i}s_{i}, & &Es_{0} = 0 = Fs_{n}.
\end{align*}

Another quantum group $U^{H}_{q} \mathfrak{sl}_{2}$ is a $\C$-algebra generated by $E, F, H, K^{\pm1}$ and its relations are given by
\begin{align*}
    &KK^{-1} = 1 = K^{-1}K, & &KEK^{-1} = q^{2}E, & &KFK^{-1} = q^{-2}F, \\
    &[E, F] = \frac{K - K^{-1}}{q - q^{-1}}, & &E^{r} = F^{r} = 0, \\
    &[H, K] = 0, & &[H, E] = 2E, & &[H, F] = -2F.
\end{align*}
We extend the coproduct, counit and antipode of the small quantum group by
\begin{align*}
    &\Delta(H) = H \otimes 1 + 1 \otimes H, & &\epsilon(H) = 0, & &S(H) = -H.
\end{align*}
We call this Hopf algebra $U^{H}_{q} \mathfrak{sl}_{2}$ the unrolled quantum group.
A $U^{H}_{q} \mathfrak{sl}_{2}$-module $V$ is called a weight module if $V$ splits into a direct sum of eigenspaces of $H$ and $K$ operates on $V$ as $q^{H}$.
Since we do not require $K^{2r} = 1$, eigenvalues of $H$ can be complex numbers.
For each $\alpha \in \C$, there is a highest weight representation $V_{\alpha}$.
The module $V_{\alpha}$ has a basis $\{ v_{i} \}_{i=0}^{r-1}$ and actions are given by
\begin{align*}
    &Ev_{i} = [i][i-\alpha]v_{i-1}, & &Fv_{i} = v_{i+1}, & &Hv_{i} = (\alpha + r - 1 - 2i)v_{i}, & &Ev_{0} = 0 = Fv_{r-1}.
\end{align*}
We call $V_{\alpha}$ typical if $\alpha \in \C \sm X_{r}$ and an atypical if $\alpha \in X_{r}$.
\subsection{Invariants} \label{sec: invariants}

Let $\Op_{r}$ be the Reshetikhin--Turaev functor and $M$ be $S_{n}$ or a typical module $V_{\alpha}$.
If $L$ is a framed link with an edge colored by $M$ and $T_{L}$ is a ribbon graph obtained by cutting the edge of $K$ then
\[
    \Op_{r}(L) = \qdim(M) \langle T_{L} \rangle
\]
, where $\langle T_{L} \rangle$ is a scalar satisfying $\Op_{r}(T_{L}) = \langle T_{L} \rangle \id_{M}$.
Since $\qdim(V_{\alpha}) = 0$, $\Op_{r}(L) = 0$ for links with edges colored by $V_{\alpha}$.

Geer--Patureau-Mirand--Turaev~\cite{modifiedqdim} introduced a modified quantum dimension $\modDim \colon \C \sm X_{r} \to \C$ defined by
\[
    \modDim(\alpha) = (-1)^{r-1} r \frac{\{ \alpha \}}{\{ r\alpha \}}.
\]
They constructed the re-normalized Reshetikhin--Turaev link invariant $\modOp_{r}$ by replacing the quantum dimension $\qdim$ with the modified quantum dimension $\modDim$.

In the rest of this subsection, we describe properties of link invariants $\Op_{r}$ and $\modOp_{r}$.
The values of the invariants for a Hopf link $H$ and a twisted edge are well-known.

\begin{lem} \label{lem: hopf}
    \begin{enumerate}
        \item Take $\alpha, \beta \in \C \sm X_{r}$, then
        \[ \modOp_{r}(H(\alpha, \beta)) = (-1)^{r-1} r q^{\alpha \beta}. \]
        \item Take $0 \leq m, n \leq r-1$, then
        \[ \Op_{r}(H(m, n)) = (-1)^{m+n} \frac{\{ (m+1)(n+1) \}}{\{ 1 \}}. \]
    \end{enumerate}
\end{lem}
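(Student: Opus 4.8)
The plan is to reduce both statements to a single computation with the universal $R$-matrix of $\unrolledqsl$ and its partial quantum trace. Recall from the previous subsection that if we cut the edge of $H$ colored by a module $M$, then $\Op_{r}(H) = \qdim(M)\langle T_{H}\rangle$ (respectively $\modOp_{r}(H) = \modDim(M)\langle T_{H}\rangle$ for the re-normalized invariant), where the scalar $\langle T_{H}\rangle$ is obtained by applying $\Op_{r}$ to the $(1,1)$-tangle $T_{H}$ in which the remaining component encircles the open strand colored by $M$. By naturality of the braiding this tangle is sent to the partial quantum trace over the encircling component of the monodromy operator $\mathcal{M}=R_{21}R$ acting on the tensor product of the two colors. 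Since each color is simple, Schur's lemma guarantees that the partial trace is a scalar multiple of $\id_{M}$, so it suffices to extract one diagonal matrix coefficient.

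For part (1) I would take $M=V_{\alpha}$ as the open strand and $V_{\beta}$ as the encircling loop. Writing $R = q^{H\otimes H/2}\sum_{n\ge 0}\frac{\{1\}^{n}}{[n]!}q^{n(n-1)/2}E^{n}\otimes F^{n}$, the first step is to evaluate $\mathcal{M}(v_{0}\otimes w_{j})$ on the highest weight vector $v_{0}$ of $V_{\alpha}$ against each basis vector $w_{j}$ of $V_{\beta}$. Because $Ev_{0}=0$, the nilpotent part of $R$ acts trivially in the first tensor slot; by weight conservation the partial trace evaluated on $v_{0}$ projects onto the $n=0$ term, so the only contribution comes from the Cartan part $q^{H\otimes H/2}$ of both $R$ and $R_{21}$. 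This gives the diagonal coefficient $\langle v_{0}\otimes w_{j}\mid\mathcal{M}\mid v_{0}\otimes w_{j}\rangle = q^{(\alpha+r-1)(\beta+r-1-2j)}$. Taking the partial quantum trace then amounts to summing these coefficients against the pivotal weights $q^{(1-r)(\beta+r-1-2j)}$ of $K^{1-r}$ over $j=0,\dots,r-1$.

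The resulting sum is geometric, and the root of unity identity $q^{2r}=1$ collapses it: one finds $\langle T_{H}\rangle = q^{\alpha\beta}\,\{r\alpha\}/\{\alpha\}$, and multiplying by $\modDim(\alpha)=(-1)^{r-1}r\,\{\alpha\}/\{r\alpha\}$ cancels the factor $\{r\alpha\}/\{\alpha\}$ to give exactly $(-1)^{r-1}r\,q^{\alpha\beta}$. For part (2) the computation is formally identical after replacing the typical modules by the symmetric modules $S_{m},S_{n}$ of $\qsl$, so that the highest weight $\alpha+r-1$ becomes $m$ and the summation index runs over $j=0,\dots,n$, and after using the ordinary quantum trace. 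The same geometric summation, now simplified with $q^{r}=-1$, yields $\langle T_{H}\rangle = (-1)^{n}\{(m+1)(n+1)\}/\{m+1\}$, and multiplying by $\qdim(S_{m})=(-1)^{m}\{m+1\}/\{1\}$ produces $(-1)^{m+n}\{(m+1)(n+1)\}/\{1\}$.

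The geometric summations are routine; the step requiring the most care is fixing the conventions so that everything matches. One must pin down the normalization of the $R$-matrix and, crucially, the pivotal element ($K^{1-r}$ rather than $K^{r-1}$) defining the quantum trace, since these choices determine the exponent appearing in $q^{\alpha\beta}$ as well as the precise signs $(-1)^{r-1}$ and $(-1)^{m+n}$. Once the conventions of \cite{modifiedqdim, CGPinv} are fixed, both identities follow from the single diagonal coefficient computation above together with the two root of unity simplifications.
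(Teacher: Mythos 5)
Your proposal is correct, but there is nothing in the paper to compare it against: the paper states this lemma with the remark that the values ``are well-known'' and offers no proof, implicitly deferring to the literature on $\unrolledqsl$ (Geer--Patureau-Mirand--Turaev, Costantino--Geer--Patureau-Mirand). What you have written is essentially the standard derivation underlying those citations, and the details check out. Cutting the strand colored by $V_{\alpha}$ (resp.\ $S_{m}$) and identifying $\langle T_{H}\rangle$ with the partial quantum trace of the monodromy $R_{21}R$ is the right reduction; the observation that $Ev_{0}=0$ kills the nilpotent part of $R$ and that weight conservation kills the off-diagonal contributions of $R_{21}$ under the partial trace is exactly what isolates the Cartan factor $q^{H\otimes H/2}$. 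The sums then come out as you say: for (1), $\sum_{j=0}^{r-1}q^{(1-r)(\beta+r-1-2j)}\,q^{(\alpha+r-1)(\beta+r-1-2j)}=\sum_{j}q^{\alpha(\beta+r-1-2j)}=q^{\alpha\beta}\{r\alpha\}/\{\alpha\}$, which cancels against $\modDim(\alpha)=(-1)^{r-1}r\{\alpha\}/\{r\alpha\}$; for (2), $\langle T_{H}\rangle=\{(m+1-r)(n+1)\}/\{m+1-r\}=(-1)^{n}\{(m+1)(n+1)\}/\{m+1\}$ using $q^{r}=-1$, and $\qdim(S_{m})=(-1)^{m}\{m+1\}/\{1\}$ gives the stated sign $(-1)^{m+n}$. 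Your caveat about conventions is the one genuine point of care: with the paper's weights $Hv_{i}=(\alpha+r-1-2i)v_{i}$ and your $R$-matrix normalization, the pivotal element must act as $K^{1-r}$ (equivalently $K^{r-1}$ with the opposite closure) for both formulas to come out as stated; reassuringly, this single choice simultaneously produces the $(-1)^{m+n}$ in (2) and the clean $q^{\alpha\beta}$ in (1), so the two parts serve as consistency checks on each other and your computation passes both.
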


\begin{lem} \label{lem: twist}
    \begin{enumerate}
        \item Let $e$ be an edge colored by $V_{\alpha}$ and have $+1$ framing.
        Then 
        \[ \modOp_{r}(e) = q^{(\alpha^{2} - (r-1)^{2})/2} \id_{V_{\alpha}}. \]
        \item Let $e$ be an edge colored by $S_{x}$ and have $+1$ framing.
        Then
        \[ \Op_{r}(e) = (-1)^{n}q^{(n^{2} + 2n)/2} \id_{S_{x}}. \]
    \end{enumerate}
\end{lem}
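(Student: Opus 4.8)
The plan is to recognize both identities as computations of the twist (ribbon) morphism of the relevant ribbon Hopf algebra, evaluated on a highest weight module. Under the Reshetikhin--Turaev functor $\Op_{r}$ (resp. its re-normalized version $\modOp_{r}$) a $+1$-framed edge, i.e. a positive curl, colored by a module $M$ is sent to the twist isomorphism $\theta_{M} \colon M \to M$. For a simple highest weight module $\theta_{M}$ is multiplication by the scalar through which the ribbon element $v$ acts, so both statements reduce to evaluating $v$ on the highest weight vector of $V_{\alpha}$ (resp. $S_{n}$).

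First I would fix the ribbon structure. For $\unrolledqsl$ (and likewise for $\qsl$) the $R$-matrix has the standard form $R = q^{H \otimes H / 2} \sum_{n \geq 0} c_{n} E^{n} \otimes F^{n}$ with explicit coefficients $c_{n}$, and the ribbon element is built as $v = K^{1-r} u$, where $u = \sum_{i} S(b_{i}) a_{i}$ is the Drinfeld element associated to $R = \sum_{i} a_{i} \otimes b_{i}$ and $K^{r-1}$ is the balancing group-like element. Since $v$ is central, it acts on the simple module as a scalar, and that scalar may be read off from its action on the highest weight vector $v_{0}$ (resp. $s_{0}$).

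The key simplification is that $E v_{0} = 0$: in $u = \sum_{i} S(b_{i}) a_{i}$ every term with $n \geq 1$ contains a factor of $E$ acting first on the highest weight vector and hence vanishes, so only the Cartan part $q^{H \otimes H / 2}$ of $R$ survives. This reduces the action of $u$ on a weight vector of weight $\lambda$ to a Gaussian factor $q^{\lambda^{2}/2}$ (with the branch fixed by the chosen convention), while the balancing element $K^{1-r}$ contributes the linear factor $q^{-(r-1)\lambda}$. For $V_{\alpha}$ the highest weight vector has $H$-eigenvalue $\lambda = \alpha + r - 1$, so $\theta_{V_{\alpha}} = q^{\lambda^{2}/2 - (r-1)\lambda} = q^{(\alpha^{2} - (r-1)^{2})/2}$, with no further simplification since $\alpha$ is generic. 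For $S_{n}$ the highest weight is $\lambda = n$ and the same computation gives $q^{n^{2}/2 - (r-1)n}$; here the integrality of $n$ lets us rewrite $q^{-rn} = (-1)^{n}$, so that $\theta_{S_{n}} = (-1)^{n} q^{(n^{2} + 2n)/2}$. Thus a single scalar computation yields both formulas, with the sign $(-1)^{n}$ appearing only in the semisimple case.

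The main obstacle is bookkeeping of conventions rather than any genuine difficulty: one must fix the branch of $q^{H \otimes H / 2}$ consistently, confirm that $K^{r-1}$ is the correct balancing element for each quantum group, and verify that the positive curl corresponds to $v$ rather than $v^{-1}$ so that the leading Gaussian exponent has the correct sign. Once these choices are pinned down the two identities follow immediately; alternatively one may simply quote the explicit ribbon element from the constructions of Geer--Patureau-Mirand--Turaev and of Reshetikhin--Turaev and substitute the weights, which is why the lemma is stated as well known.
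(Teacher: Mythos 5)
Your proposal is correct. The paper itself offers no proof of this lemma---it is quoted as a well-known fact, implicitly deferred to the constructions of Reshetikhin--Turaev and Geer--Patureau-Mirand--Turaev---so your ribbon-element computation supplies exactly the standard argument the paper leaves implicit: only the Cartan part $q^{H\otimes H/2}$ of the $R$-matrix survives on a highest weight vector, the balancing element $K^{1-r}$ contributes $q^{-(r-1)\lambda}$, and the arithmetic checks out in both cases, since $\lambda = \alpha+r-1$ gives $\tfrac{1}{2}\lambda^{2}-(r-1)\lambda = \tfrac{1}{2}\left(\alpha^{2}-(r-1)^{2}\right)$, while for $S_{n}$ the same exponent $\tfrac{1}{2}n^{2}-(r-1)n$ equals $\tfrac{1}{2}(n^{2}+2n)-rn$ and $q^{-rn}=(-1)^{n}$ produces the sign. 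The convention issues you flag (branch of $q^{H\otimes H/2}$, twist versus inverse twist for a positive curl) are genuine but routine, and your closing observation---that one may instead simply cite the explicit ribbon element from the literature---is precisely the stance the paper takes.
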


The invariants are well-behaved under a connected sum.
\begin{lem} \label{lem: connected_sum}
    \begin{enumerate}
        \item Take $\alpha \in \C \sm X_{r}$ and let $T, T'$ be ribbon graphs with edges $e, e'$ colored by $V_{\alpha}$.
        For a connected sum $T \# T'$ along the edges, $\modOp_{r}$ satisfies
        \[ \modOp_{r}(T \# T') = \modDim(\alpha)^{-1} \modOp_{r}(T) \modOp_{r}(T'). \]
        \item Take $x \in \{ 0, \dots, r-1 \}$ and let $T, T'$ be ribbon graphs with edges $e, e'$ colored by $S_{x}$.
        For a connected sum $T \# T'$ along the edges, $\Op_{r}$ satisfies
        \[ \Op_{r}(T \# T') = \qdim(S_{x})^{-1} \Op_{r}(T) \Op_{r}(T'). \]
    \end{enumerate}
\end{lem}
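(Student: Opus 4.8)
The plan is to reduce both statements to Schur's lemma together with the functoriality of $\Op_{r}$ under composition of $(1,1)$-tangles. First I would recall the set-up from \Cref{sec: invariants}: if a ribbon graph has an edge colored by a simple module $M$ (here $M = V_{\alpha}$ with $\alpha \in \C \sm X_{r}$, or $M = S_{x}$ with $x \in \{0, \dots, r-1\}$), then cutting that edge produces a ribbon $(1,1)$-tangle whose invariant is a scalar multiple of the identity, say $\langle T \rangle \id_{M}$. The existence of this scalar rests precisely on irreducibility, i.e.\ on $\Hom(M, M) = \C\, \id_{M}$, and this is the only place irreducibility is used; I would verify it for the typical modules $V_{\alpha}$ and for the $S_{x}$ directly from the highest-weight actions recorded in \Cref{subsec: quantum group}.

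Next I would identify the connected sum with a composition of cut tangles. Cutting $T$ along $e$ and $T'$ along $e'$ yields $(1,1)$-tangles $\hat T$ and $\hat T'$, each carrying the color $M$ on its free strand; forming $T \# T'$ glues these cut strands, so that cutting $T \# T'$ along the merged edge returns exactly the composite $\hat T \circ \hat T'$. Because $\Op_{r}$ is a functor it respects composition, and since each factor is scalar we get
\[
    \Op_{r}(\hat T \circ \hat T') = (\langle T \rangle \id_{M}) \circ (\langle T' \rangle \id_{M}) = \langle T \rangle \langle T' \rangle \id_{M},
\]
so the cutting scalar is multiplicative, $\langle T \# T' \rangle = \langle T \rangle \langle T' \rangle$. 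Here the position of the cut on the merged edge is irrelevant, again by simplicity of $M$.

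Finally I would close up the diagrams. For the modified invariant one has $\modOp_{r}(\,\cdot\,) = \modDim(\alpha)\,\langle \,\cdot\, \rangle$ on any ribbon graph whose cut carries color $V_{\alpha}$; solving $\langle T \rangle = \modOp_{r}(T)/\modDim(\alpha)$ and $\langle T' \rangle = \modOp_{r}(T')/\modDim(\alpha)$ and substituting into $\modOp_{r}(T \# T') = \modDim(\alpha)\,\langle T \rangle \langle T' \rangle$ yields statement (1). Statement (2) is identical after replacing $\modDim(\alpha)$ by $\qdim(S_{x})$ and $\modOp_{r}$ by $\Op_{r}$. The step I expect to be the main obstacle is justifying that the scalar $\langle \,\cdot\, \rangle$ read off from the open $(1,1)$-tangle is the \emph{same} in both normalizations, so that the identical $\langle T \rangle$ may be used for $\Op_{r}$ and $\modOp_{r}$; this must be argued from the fact that $\modOp_{r}$ re-normalizes only the closure (replacing the quantum trace by the modified one), leaving the value of the functor on open tangles untouched.
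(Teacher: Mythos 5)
Your proof is correct, and it is the standard argument for this statement. One point of comparison: the paper itself offers no proof of this lemma at all --- it is quoted as a well-known property of the invariants (the surrounding text introduces it with ``The invariants are well-behaved under a connected sum''), deferring implicitly to Geer--Patureau-Mirand--Turaev and Costantino--Geer--Patureau-Mirand --- so your write-up supplies an argument where the paper cites the literature. Your three steps are exactly how those sources establish it: Schur's lemma giving $\Hom(M,M) = \C \, \id_{M}$ for the simple colors $V_{\alpha}$ with $\alpha \in \C \sm X_{r}$ (simplicity is checked from the action $Ev_{i} = [i][i-\alpha]v_{i-1}$, whose coefficients are nonzero precisely when $\alpha \notin X_{r}$) and for $S_{x}$; functoriality of the Reshetikhin--Turaev functor turning the cut connected sum into the composition $\hat{T} \circ \hat{T}'$, hence multiplicativity of the cutting scalar $\langle T \# T' \rangle = \langle T \rangle \langle T' \rangle$; and recovering both closed invariants from that scalar via the closure factors $\modDim(\alpha)$ and $\qdim(S_{x})$ respectively. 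You also correctly identify and resolve the one genuinely delicate point: $\Op_{r}$ and $\modOp_{r}$ come from the \emph{same} functor on open $(1,1)$-tangles, the re-normalization changing only the factor used to close up, so the same $\langle T \rangle$ enters both formulas. One small caveat, inherited from the statement itself rather than introduced by you: in part (2) the division by $\qdim(S_{x}) = [x+1]$ requires $x \neq r-1$, since $[r] = 0$; the identity is better recorded in the cleared form $\qdim(S_{x}) \Op_{r}(T \# T') = \Op_{r}(T) \Op_{r}(T')$, which your argument proves for all $x$, and in the paper's application the color is $S_{r-1-x_{v_{1}}}$ with $x_{v_{1}} \in X_{r}$, so the denominator never vanishes there.
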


The following lemmas are technical lemmas appeard in the proof of the main theorem.
\begin{lem} \label{lem: limit sign}
    Let $H_{(0, f_{v'})}(\alpha_{v_{1}}, \alpha_{v'})$ be a Hopf link colored by $(\alpha_{v_{1}}, \alpha_{v'}) \in (\C \sm X_{r})^{2}$ with framing $(0, f_{v'})$.
    Let $\Gamma$ be a plumbed graph and $\Gamma'$ be a connected sum $\Gamma \# H$ at $v_{1}$.
    For $\veps_{H} \in \{ \pm1 \}$ and $\veps' \in \{ \pm1 \}^{V'}$,
    \[
        \lim_{\substack{\alpha_{v_{1}} \to x_{v_{1}} \\ \alpha_{v'} \to \veps_{H}(e') x_{v'}}}
        \modOp_{r}(H_{(0, f_{v'})}(\alpha_{v_{1}}, \alpha_{v'}))
        =
        \lim_{\substack{\alpha_{v_{1}} \to x_{v_{1}}(\veps') \\ \alpha_{v'} \to x_{v'}(\veps')}}
        \modOp_{r}(H_{(0, f_{v'})}(\alpha_{v_{1}}, \alpha_{v'}))
    \]
    , where $V'$ is a set of vertices of $\Gamma'$.
\end{lem}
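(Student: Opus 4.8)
The plan is to reduce both limits to evaluations of a single explicit entire function and then to observe that this function is invariant under the sign change relating the two limit points. By \Cref{lem: hopf}(1) and \Cref{lem: twist}(1), the value of the framed Hopf link is
\begin{equation*}
    \modOp_{r}(H_{(0, f_{v'})}(\alpha_{v_{1}}, \alpha_{v'}))
    = (-1)^{r-1} r \, q^{-f_{v'}(r-1)^{2}/2} \, q^{\alpha_{v_{1}}\alpha_{v'} + f_{v'}\alpha_{v'}^{2}/2},
\end{equation*}
where the framing $f_{v'}$ on the component $v'$ contributes the twist factor of \Cref{lem: twist}(1) raised to the power $f_{v'}$, while the framing $0$ on the component $v_{1}$ contributes nothing. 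In particular this is an entire function of $(\alpha_{v_{1}}, \alpha_{v'})$, so each of the two limits in the statement is simply the evaluation of this function at the corresponding limit point.

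Next I would identify the two limit points and record that they differ by a global sign. Writing $s := \prod_{e \in P_{v_{1}}} \veps'_{e}$ for the product of the signs of $\veps'$ along the path from the base vertex $v_{0}$ to $v_{1}$, and using $P_{v'} = P_{v_{1}} \cup \{ e' \}$ together with $\veps_{H} = \veps'_{e'}$, the definition of $x(\veps')$ gives
\begin{align*}
    x_{v_{1}}(\veps') &= s\, x_{v_{1}}, & x_{v'}(\veps') &= s\, \veps_{H}\, x_{v'}.
\end{align*}
Thus the right-hand limit point $(x_{v_{1}}(\veps'), x_{v'}(\veps'))$ is obtained from the left-hand limit point $(x_{v_{1}}, \veps_{H} x_{v'})$ by multiplying \emph{both} coordinates by the common sign $s \in \{\pm1\}$.

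Finally I would check the invariance by direct substitution. The color-dependent part of the value is $q^{\alpha_{v_{1}}\alpha_{v'} + f_{v'}\alpha_{v'}^{2}/2}$, whose exponent involves the colors only through the bilinear term $\alpha_{v_{1}}\alpha_{v'}$ and the quadratic term $\alpha_{v'}^{2}$. Replacing $(\alpha_{v_{1}}, \alpha_{v'})$ by $(s\,\alpha_{v_{1}}, s\,\alpha_{v'})$ multiplies $\alpha_{v_{1}}\alpha_{v'}$ by $s^{2} = 1$ and multiplies $\alpha_{v'}^{2}$ by $s^{2} = 1$, so the exponent, and hence the whole value, is unchanged. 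Evaluating at the two points therefore yields the same number, which is the asserted identity.

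The computation itself is immediate; the main obstacle is the bookkeeping of signs and paths, namely verifying that the path $P_{v'}$ to the new Hopf vertex factors as $P_{v_{1}}$ followed by the new edge $e'$ and that the sign $\veps_{H}$ attached to $e'$ is exactly the extra factor appearing in $x_{v'}(\veps')$ relative to $x_{v_{1}}(\veps')$. Once this is set up, the equality reduces to the evenness of the relevant bilinear and quadratic forms in the global sign $s$.
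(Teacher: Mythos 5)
Your proposal is correct and takes essentially the same route as the paper's proof: both reduce the claim to the explicit Hopf link formula of \Cref{lem: hopf} and observe that the exponent is unchanged when both colors are multiplied by the common sign $s=\prod_{e \in P_{v_{1}}}\veps'_{e}$, since $s^{2}=1$. If anything you are slightly more careful than the paper, whose proof applies the unframed formula $(-1)^{r-1}rq^{\alpha_{v_{1}}\alpha_{v'}}$ directly to $H_{(0,f_{v'})}$ and silently drops the twist factor $q^{f_{v'}(\alpha_{v'}^{2}-(r-1)^{2})/2}$; as your argument notes, this factor is also even in $\alpha_{v'}$, so the conclusion is unaffected either way.
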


\begin{proof}
    \begin{align*}
        \lim_{\substack{\alpha_{v_{1}} \to x_{v_{1}} \\ \alpha_{v'} \to \veps_{H}(e') x_{v'}}}
        \modOp_{r}(H_{(0, f_{v'})}(\alpha_{v_{1}}, \alpha_{v'}))
        &=
        \lim_{\substack{\alpha_{v_{1}} \to x_{v_{1}} \\ \alpha_{v'} \to \veps_{H}(e') x_{v'}}}
        (-1)^{r-1} r q^{\alpha_{v_{1}} \alpha_{v'}} \\
        &=
        \lim_{\substack{\alpha_{v_{1}} \to x_{v_{1}} \\ \alpha_{v'} \to \veps_{H}(e') x_{v'}}}
        (-1)^{r-1} r q^{\left( \prod_{e \in P_{v_{1}}} \veps(e) \alpha_{v_{1}} \right) \left( \prod_{e \in P_{v_{1}}} \veps(e) \alpha_{v'} \right)}.
    \end{align*}
    Let $\alpha'_{v_{1}} = \prod_{e \in P_{v_{1}}} \veps(e) \alpha_{v_{1}}$ and $\alpha'_{v'} = \prod_{e \in P_{v_{1}}} \veps(e) \alpha_{v'}$, then we obtain the right hand side.
\end{proof}

\begin{lem} \label{lem: limit}
    Take $\alpha \in (\C \sm X_{r})^{V}, \alpha_{v'} \in \C \sm X_{r}, x \in X_{r}^{V}, x_{v'} \in X_{r}$.
    For $\alpha' = (\alpha, \alpha_{v'}) \in (\C \sm X_{r})^{V'}, x' = (x, x_{v'}) \in (\C \sm X_{r})^{V}$, $\modOp_{r}$ satisfies
    \[
        \lim_{\alpha' \to x'} 
        \frac{
            \modOp_{r}(\Gamma \# H_{(0, f')}(\alpha_{v_{1}}, \alpha_{v'}))
        }{
                \prod_{v \in V'_{\geq 2}} \{ r\alpha_{v} \}^{d'_{v}-1}
        }
        =
        q^{f'(x_{v'}^{2} - (r-1)^{2})/2}
        \frac{
                q^{x_{v_{1}} x_{v'}}
        }{
            \{ x_{v_{1}} \}
        } 
        \lim_{\alpha \to x} 
        \frac{
            \modOp_{r}(\Gamma)
        }{
            \prod_{v \in V_{\geq 2}} \{ r\alpha_{v} \}^{d_{v}-1}.
        }
    \]
\end{lem}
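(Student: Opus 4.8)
The plan is to reduce the statement to the three structural lemmas already in hand—the connected sum formula (\Cref{lem: connected_sum}), the Hopf link value (\Cref{lem: hopf}), and the twist formula (\Cref{lem: twist})—and then to track exactly how the normalizing product $\prod_{v} \{ r\alpha_{v} \}^{d_{v}-1}$ changes when the leaf $v'$ is grafted onto $v_{1}$. First I would apply the connected sum formula along the edge colored by $V_{\alpha_{v_{1}}}$ to obtain
\[
    \modOp_{r}(\Gamma \# H_{(0, f')}(\alpha_{v_{1}}, \alpha_{v'}))
    = \modDim(\alpha_{v_{1}})^{-1} \, \modOp_{r}(\Gamma) \, \modOp_{r}(H_{(0, f')}(\alpha_{v_{1}}, \alpha_{v'})).
\]
Then I would evaluate the Hopf factor: starting from $\modOp_{r}(H(\alpha_{v_{1}}, \alpha_{v'})) = (-1)^{r-1} r \, q^{\alpha_{v_{1}} \alpha_{v'}}$ and inserting $f'$ positive twists on the $v'$-strand via \Cref{lem: twist} multiplies this by $q^{f'(\alpha_{v'}^{2} - (r-1)^{2})/2}$. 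Substituting $\modDim(\alpha_{v_{1}})^{-1} = \{ r\alpha_{v_{1}} \} / ((-1)^{r-1} r \{ \alpha_{v_{1}} \})$, the factor $(-1)^{r-1} r$ cancels and I arrive at the clean expression
\[
    \modOp_{r}(\Gamma \# H_{(0, f')}(\alpha_{v_{1}}, \alpha_{v'}))
    = \frac{\{ r\alpha_{v_{1}} \}}{\{ \alpha_{v_{1}} \}} \, q^{f'(\alpha_{v'}^{2} - (r-1)^{2})/2} \, q^{\alpha_{v_{1}} \alpha_{v'}} \, \modOp_{r}(\Gamma).
\]

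The combinatorial heart of the argument is comparing the two normalizing products. Attaching the leaf $v'$ to $v_{1}$ raises $d_{v_{1}}$ by one (so $d'_{v_{1}} = d_{v_{1}} + 1$), creates the degree-one vertex $v'$ which never contributes since $v' \notin V'_{\geq 2}$, and leaves all other degrees fixed. A short case analysis—treating $d_{v_{1}} = 1$ and $d_{v_{1}} \geq 2$ separately, but noting the two cases merge because $\{ r\alpha_{v_{1}} \}^{d_{v_{1}}-1} = 1$ exactly when $d_{v_{1}} = 1$, so that $v_{1}$'s contribution is uniformly $\{ r\alpha_{v_{1}} \}^{d_{v_{1}}-1}$ in $\Gamma$ and $\{ r\alpha_{v_{1}} \}^{d_{v_{1}}}$ in $\Gamma'$—yields
\[
    \prod_{v \in V'_{\geq 2}} \{ r\alpha_{v} \}^{d'_{v}-1} = \{ r\alpha_{v_{1}} \} \prod_{v \in V_{\geq 2}} \{ r\alpha_{v} \}^{d_{v}-1}.
\]
Dividing the previous display by this product, the $\{ r\alpha_{v_{1}} \}$ factors cancel, leaving the prefactor $q^{f'(\alpha_{v'}^{2} - (r-1)^{2})/2} q^{\alpha_{v_{1}} \alpha_{v'}} / \{ \alpha_{v_{1}} \}$ multiplying $\modOp_{r}(\Gamma) / \prod_{v \in V_{\geq 2}} \{ r\alpha_{v} \}^{d_{v}-1}$.

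Finally I would pass to the limit $\alpha' \to x'$. The crucial observation is that the prefactor is holomorphic at the limit point and its denominator $\{ \alpha_{v_{1}} \}$ does not vanish there, because $x_{v_{1}} \in X_{r} = \Z \sm r\Z$ forces $\{ x_{v_{1}} \} \neq 0$; hence the prefactor passes through the limit and evaluates to $q^{f'(x_{v'}^{2} - (r-1)^{2})/2} q^{x_{v_{1}} x_{v'}} / \{ x_{v_{1}} \}$, while the remaining possibly singular factor becomes exactly $\lim_{\alpha \to x} \modOp_{r}(\Gamma) / \prod_{v \in V_{\geq 2}} \{ r\alpha_{v} \}^{d_{v}-1}$, giving the claimed identity. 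The main obstacle I expect is the degree bookkeeping in the normalizing product: one must verify that $v_{1}$ always lands in $V'_{\geq 2}$ after grafting, that $v'$ itself never appears, and that the $d_{v_{1}} = 1$ boundary case is consistent with the uniform exponent formula. Everything else is a direct substitution of the cited lemmas together with a continuity argument at a point where $\{ \alpha_{v_{1}} \}$ is nonzero.
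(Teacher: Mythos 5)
Your proposal is correct and follows essentially the same route as the paper's own proof: split off the Hopf factor via \Cref{lem: connected_sum}, evaluate it with \Cref{lem: hopf} and \Cref{lem: twist}, absorb the $\{ r\alpha_{v_{1}} \}$ from $\modDim(\alpha_{v_{1}})^{-1}$ into the normalizing product using $d'_{v_{1}} = d_{v_{1}} + 1$, and pass to the limit. The only difference is that you spell out the $d_{v_{1}} = 1$ boundary case and the nonvanishing of $\{ x_{v_{1}} \}$ explicitly, which the paper leaves implicit.
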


\begin{proof}
    The connected sum splits by \Cref{lem: connected_sum} as below:
    \begin{align*}
        \frac{\modOp_{r}(\Gamma_{f}(\alpha) \# H_{(0, f')}(\alpha_{v_{1}}, \alpha_{v'}))}{\prod_{v \in V'_{\geq 2}} \{ r\alpha_{v} \}^{d'_{v}-1}}
        &=
        \frac{
            \modDim(\alpha_{v_{1}})^{-1} \modOp_{r}(\Gamma_{f}(\alpha)) \modOp_{r}(H_{(0, f')}(\alpha_{v_{1}}, \alpha_{v'}))
        }{
            \prod_{v \in V'_{\geq 2}} \{ r\alpha_{v} \}^{d'_{v}-1}
        }\\
        &=
        q^{f'(\alpha_{v'}^{2} - (r-1)^{2})/2}
        \frac{
            q^{\alpha_{v_{1}} \alpha_{v'}}
        }{
            \{ \alpha_{v_{1}} \}
        }
        \frac{
            \{ r\alpha_{v_{1}}  \} \modOp_{r}(\Gamma_{f}(\alpha))
        }{
            \prod_{v \in V'_{\geq 2}} \{ r\alpha_{v} \}^{d'_{v}-1}.
        }
    \end{align*}
    Since $d'_{v_{1}} = d_{v_{1}} + 1$, we obtain
    \[
        q^{f'(\alpha_{v'}^{2} - (r-1)^{2})/2}
        \frac{
            q^{\alpha_{v_{1}} \alpha_{v'}}
        }{
            \{ \alpha_{v_{1}} \}
        }
        \frac{
            \{ r\alpha_{v_{1}}  \} \modOp_{r}(\Gamma_{f}(\alpha))
        }{
            \prod_{v \in V'_{\geq 2}} \{ r\alpha_{v} \}^{d'_{v}-1}
        }
        =
        q^{f'(\alpha_{v'}^{2} - (r-1)^{2})/2}
        \frac{
            q^{\alpha_{v_{1}} \alpha_{v'}}
        }{
            \{ \alpha_{v_{1}} \}
        }
        \frac{
            \modOp_{r}(\Gamma_{f}(\alpha))
        }{
            \prod_{v \in V_{\geq 2}} \{ r\alpha_{v} \}^{d_{v}-1}
        }.
    \]
    By the above equation, 
    \[
        \frac{\modOp_{r}(\Gamma'_{f'}(\alpha'))}{\prod_{v \in V'_{\geq 2}} \{ r\alpha_{v} \}^{d'_{v}-1}}
        =
        q^{f'(\alpha_{v'}^{2} - (r-1)^{2})/2}
        \frac{
            q^{\alpha_{v_{1}} \alpha_{v'}}
        }{
            \{ \alpha_{v_{1}} \}
        }
        \frac{
            \modOp_{r}(\Gamma_{f}(\alpha))
        }{
            \prod_{v \in V_{\geq 2}} \{ r\alpha_{v} \}^{d_{v}-1}
        }
    \]
    holds.
    Taking a limit, we completes the proof.
\end{proof}

\section{A proof of the main Theorem} \label{sec: main}

We prove \Cref{thm: main} by induction.
When $\Gamma$ is a Hopf link $H$ (see \Cref{fig: hopf}), the left hand side reduces to
\begin{align*}
    \Op_{r}(H_{f}(r-1-x)) 
    &= 
    \left( \prod_{v = v_{1}, v'} (-1)^{f_{v}(r-1-x_{v})} q^{f_{v}(x_{v}^{2} + 2x_{v})/2} \right)
    (-1)^{x_{v_{1}} + x_{v'}} \frac{\{ (r - x_{v_{1}})(r - x_{v'}) \}}{\{ 1 \}} \\
    &=
    \left( \prod_{v = v_{1}, v'} (-1)^{f_{v}(r-1-x_{v})} q^{f_{v}(x_{v}^{2} + 2x_{v})/2} \right)
    \frac{(-1)^{1}}{r\{ 1 \}}
    (-1)^{r-1}r
    \{ x_{v_{1}}x_{v'} \}
\end{align*}
by \Cref{lem: hopf} and \Cref{lem: twist}.

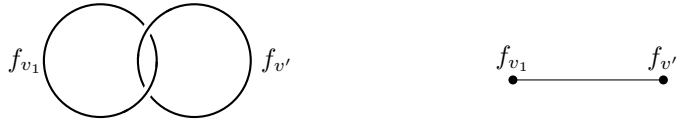
\begin{figure}[b]
	\begin{minipage}[htb]{0.45\linewidth}
		\centering
        \begin{tikzpicture}[scale=0.5]
			\begin{knot}[
				clip width=5, 
				flip crossing=1, 
				flip crossing=4, 
				flip crossing=6
				]
				\strand[thick] (0, 0) circle [radius=1.5];
				\strand[thick] (2.5, 0) circle [radius=1.5];

                \node (1) at (-2, 0) {$f_{v_{1}}$};
				\node (2) at (4.7, 0) {$f_{v'}$};
			\end{knot}
		\end{tikzpicture}
	\end{minipage}
	\begin{minipage}[htb]{0.45\linewidth}
		\centering
        \begin{tikzpicture}
            \coordinate[label=above:$f_{v_{1}}$](O)at(0, 0);
            \coordinate[label=above:$f_{v'}$](A)at(2, 0);
            \foreach \P in{O, A}
            {
                \fill[black](\P)circle(0.06);
            }

            \draw (O) -- (A);
		\end{tikzpicture}
	\end{minipage}
\caption{A Hopf link and the corresponding plumbed graph $H$}
\label{fig: hopf}
\end{figure}

We expand $\{ x_{v_{1}}x_{v'} \}$ and take $v_{1}$ as a base point to obtain the following equation: 
\[
    \frac{(-1)^{1}}{r\{ 1 \}}
    (-1)^{r-1}r
    \{ x_{v_{1}}x_{v'} \}
    =
    \frac{(-1)^{1}}{r\{ 1 \}}
    \sum_{\veps(e') \in \{ \pm1 \}} \veps(e')
    (-1)^{r-1}r q^{x_{v_{1}}(\veps)x_{v'}(\veps)}.
\]
We can prove 
\[
    (-1)^{f_{v}(r-1-x_{v})} q^{f_{v}(x_{v}^{2} + 2x_{v})/2} 
    =
    q^{f_{v}(x_{v}^{2} - (r-1)^{2})/2}
\]
by direct calculation.
Applying \Cref{lem: hopf}, we show that \Cref{thm: main} holds for $\Gamma = H$:
\[
    \Op_{r}(H_{f}(r-1-x)) 
    =
    \frac{(-1)^{1}}{r\{ 1 \}}
    \sum_{\veps(e') \in \{ \pm1 \}} \veps(e')
    \lim_{\alpha \to x} \modOp_{r}(H(\alpha)).
\]

Assume that the equation holds for a plumbed graph $\Gamma$ and we will show that \Cref{thm: main} still holds for a connected sum $\Gamma' = \Gamma \# H$ with a Hopf link $H$.
We fix a vertex $v_{1}$ where $\Gamma$ and $H$ are joined.
See \Cref{fig: connected sum}.

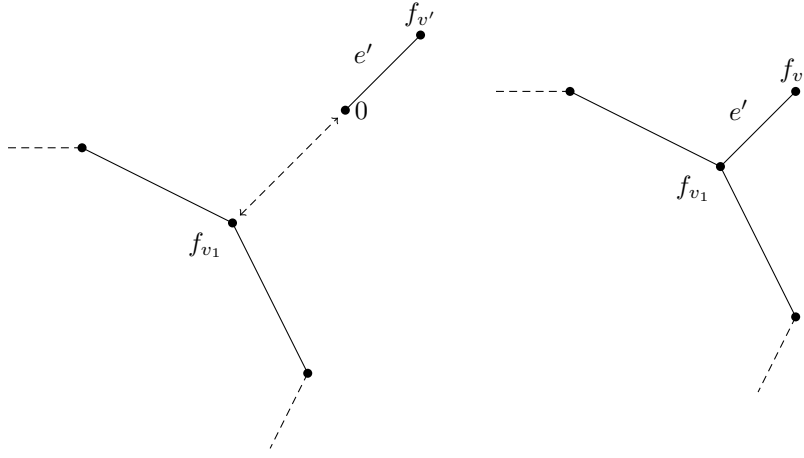
\begin{figure}[htb]
	\begin{minipage}[htb]{0.45\linewidth}
		\centering
		\begin{tikzpicture}
            \coordinate[label=225:$f_{v_{1}}$](O)at(0, 0);
            \coordinate(A)at(-2, 1);
            \coordinate(B)at(1, -2);

            \coordinate[label=right:$0$](C)at(1.5, 1.5);
            \coordinate[label=above:$f_{v'}$] (D) at (2.5, 2.5);
            \foreach \P in{O, A, B, C, D}
            {
                \fill[black](\P)circle(0.06);
            }

            \coordinate[label=135:$e'$] (E) at (2, 2);

            \draw (O) -- (A);
            \draw (O) -- (B);
            \draw[densely dashed] (A) -- (-3, 1);
            \draw[densely dashed] (B) -- (0.5, -3);

            \draw (C) -- (D);

            \draw[<->, densely dashed] (0.1, 0.1) -- (1.4, 1.4);
		\end{tikzpicture}
	\end{minipage}
	\begin{minipage}[htb]{0.45\linewidth}
		\centering
		\begin{tikzpicture}
            \coordinate[label=225:$f_{v_{1}}$](O)at(0, 0);
            \coordinate(A)at(-2, 1);
            \coordinate(B)at(1, -2);
            \coordinate[label=above:$f_{v'}$] (C) at (1, 1);

            \foreach \P in{O, A, B, C}
            {
                \fill[black](\P)circle(0.06);
            }

            \coordinate[label=135:$e'$] (E) at (0.5, 0.5);

            \draw (O) -- (A);
            \draw (O) -- (B);
            \draw[densely dashed] (A) -- (-3, 1);
            \draw[densely dashed] (B) -- (0.5, -3);

            \draw (O) -- (C);
		\end{tikzpicture}
	\end{minipage}
\caption{$\Gamma'$ obtained by joining $\Gamma$ and $H$ at $v_{1}$}
\label{fig: connected sum}
\end{figure}

Then \Cref{lem: connected_sum} shows
\begin{align*}
    &\Op_{r}\left( \Gamma_{f}(r-1-x) \# H_{(0, f_{v'})}(r-1-x_{v_{1}}, r-1-x_{v'}) \right) \\
    &=
    \qdim(S_{r-1-x_{v_{1}}})^{-1} 
    \Op_{r}\left( \Gamma_{f}(r-1-x) \right) 
    \Op_{r}\left( H_{(0, f_{v'})}(r-1-x_{v_{1}}, r-1-x_{v'}) \right).
\end{align*}
From the induction hypothesis, we find that the right hand side of the above equation equals to
\begin{align*}
    &(-1)^{r-1-x_{v_{1}}} \frac{\{1\}}{\{r - x_{v_{1}}\}} \\
    &\times
    \frac{
            (-1)^{|E| + \sum_{v \in V_{\geq 2}} (d_{v}-1) x_{v}}
        }{
            r \{ 1 \}
        }
    \sum_{\veps \in \{ \pm1 \}^{E}}
    \left( \prod_{e \in E} \veps(e) \right)
    \lim_{\alpha \to x(\veps)} 
    \frac{
            \modOp_{r}(\Gamma_{f}(\alpha))
        }{
            \prod_{v \in V_{\geq 2}} \{ r \alpha_{v} \}^{d_{v}-1}
        } \\
    &\times
    \frac{(-1)}{r\{1\}}
    \sum_{\veps_{H}(e') \in \{ \pm1 \}} \veps(e') 
    \lim_{\alpha_{v_{1}} \to x_{v_{1}}, \alpha_{v'} \to \veps_{H}(e') x_{v'}} \modOp_{r}(H_{0, f_{v'}}(\alpha_{v_{1}}, \alpha_{v'})).
\end{align*}
\Cref{lem: limit sign} shows 
\begin{align*}
    &\frac{
            (-1)^{|E'| + \sum_{v \in V'_{\geq 2}} (d'_{v}-1) x_{v}}
        }{
            r \{ 1 \}
        }
    \frac{(-1)^{r-1}}{\{x_{v_{1}}\}} \\
    &\times
    \sum_{\veps' \in \{\pm1\}^{E'}}
    \left( \prod_{e \in E'} \veps'(e) \right)
    \lim_{\alpha' \to x'(\veps')}
    \frac{
         \modOp_{r}(\Gamma_{f}(\alpha))
         \modOp_{r}(H_{(0, f_{v'})}(\alpha_{v_{1}}, \alpha_{v'}))
       }{
         \prod_{v \in V_{\geq 2}} \{ r \alpha_{v} \}^{d_{v}-1}
        }.
\end{align*}
From \Cref{lem: connected_sum}, we see
\[
    \modOp_{r}(\Gamma_{f}(\alpha))
    \modOp_{r}(H_{(0, f_{v'})}(\alpha_{v_{1}}, \alpha_{v'}))
    =
    (-1)^{r-1}r\frac{\alpha_{v_{1}}}{r\alpha_{v_{1}}}
    \modOp_{r}(\Gamma_{f}(\alpha) \# H_{(0, f_{v'})}(\alpha_{v_{1}}, \alpha_{v'})).
\]
Applying the equation, we obtain
\begin{align*}
    \Op_{r}\left( \Gamma'_{f'}(r-1-x) \right)
    =
    &\frac{
            (-1)^{|E'| + \sum_{v \in V'_{\geq 2}} (d'_{v}-1) x_{v}}
        }{
            r \{ 1 \}
        } \\
    &\sum_{\veps' \in \{\pm1\}^{E'}}
    \left( \prod_{e \in E'} \veps'(e) \right)
    \lim_{\alpha' \to x'(\veps')}
    \frac{
         \modOp_{r}(\Gamma'_{f'}(\alpha'))
       }{
         \prod_{v \in V'_{\geq 2}} \{ r \alpha_{v} \}^{d'_{v}-1}
        }.
\end{align*}
This completes the induction.


\bibliographystyle{alpha}
\bibliography{relations}

\end{document}